\newcommand{\ka}{\left(}
\newcommand{\kz}{\right)}
\newcommand{\R}{{\bar R}}
\newcommand{\al}{\alpha}
\newcommand{\be}{\beta}
\newcommand{\ga}{\gamma}
\newcommand{\auskommentieren}[1]{}
\newcommand{\beq}{\begin{equation}}
\newcommand{\eeq}{\end{equation}}
\newcommand{\bea}{\begin{equation}\begin{aligned}}
\newcommand{\eea}{\end{aligned}\end{equation}}
\newtheorem{theorem}{Theorem}[section]
\newtheorem{lem}[theorem]{Lemma}
\theoremstyle{definition}
\newtheorem{rem}[theorem]{Remark}
\newcommand{\q}{\quad}
\numberwithin{equation}{section}
\DeclareMathOperator{\pr}{pr}
\DeclareMathOperator{\graph}{graph}
\begin{document}
\title{A note on inverse mean curvature flow in cosmological spacetimes}
\author{Heiko Kr\"oner}
\maketitle
\begin{abstract}
In \cite{CG} Gerhardt proves longtime existence for the inverse mean curvature flow in globally hyperbolic Lorentzian manifolds with compact Cauchy hypersurface, which satisfy three main structural assumptions: a strong volume decay condition, a mean curvature barrier condition and the timelike convergence condition. Furthermore, it is shown in \cite{CG} that the leaves of the inverse mean curvature flow provide a foliation of the future of the initial hypersurface. 

We show that this result persists, if we generalize the setting by leaving the mean curvature barrier assumption out. For initial hypersurfaces with sufficiently large mean curvature we can weaken the timelike convergence condition to a physically relevant energy condition.  
\end{abstract}

\tableofcontents

\section{Introduction and main result}
\footnote{{\bf Acknowledgment:} The author would like to thank Claus Gerhardt for stimulating discussions on the topic.}
\footnote{{\bf Address:} Mathematisches Institut, 
Universit\"at T\"ubingen,
Auf der Morgenstelle 10,
D-72076 T\"ubingen,
Germany. \\ {\bf email:} kroener@na.uni-tuebingen.de}
The inverse mean curvature flow has been considered by Huisken and Ilmanen \cite{HI} to prove the Penrose inequality or by Gerhardt \cite{G_euklid} in an Euclidean setting.

In \cite{CG} Gerhardt considers the inverse mean curvature flow in a globally hyperbolic Lorentzian manifold with compact Cauchy hypersurface, which satisfies the so-called timelike convergence condition, i.e. the Ricci tensor is nonnegative on the set of timelike unit vectors 
\bea \label{zeitartige_Kon} \R_{\al \be}
{\nu}^{\al}{\nu}^{\be} \ge 0 \q \forall \left<\nu, \nu\right> = -1, \eea 
a mean curvature barrier condition with respect to the future and a strong volume decay condition with respect to the future. For definitions we refer to \cite{CG}.

There it is shown, that the inverse mean curvature flow with connected, spacelike and closed initial surface with positive mean curvature exists for all times, that the flow hypersurfaces provide a foliation of the future of the initial surface and that the evolution parameter $t$ can be used as a new and physically meaningful time function.

We prove that all these results persist true, if we omit the assumption that a mean curvature barrier condition with respect to the future is fulfilled.

Furthermore we prove, that for initial hypersurfaces with sufficiently large mean curvature all above results stay true, if we relax the timelike convergence condition to the energy condition
\bea 
\label{ricci} \R_{\al \be} {\nu}^{\al}{\nu}^{\be} \ge -
\Lambda \q \forall \left<\nu, \nu\right> = -1 
\eea
for some $\Lambda > 0$.

In the following we assume that $N=N^{n+1}$ is a connected, time-oriented and smooth Lorentzian manifold. Furthermore, all functions will be assumed to be smooth; quantities like the Ricci tensor $\bar R_{\al \be}$, which refer to the ambient space (and not to hypersurfaces), are marked by a bar; greek indices range from 0 to n and latin indices from 1 to n. For a detailed introduction to our notations we refer to \cite{CG}. We formulate our result in two theorems.

\begin{theorem} \label{Hauptresultat}
Let $N$ be globally hyperbolic with connected and compact Cauchy hypersurface
$S_0$ und let $N$ fulfill a strong volume decay condition with respect to the future. Furthermore we assume that

\bea \bar R_{\alpha \beta}
{\nu}^{\alpha}{\nu}^{\beta} \ge -\Lambda \q \forall \left<\nu, \nu\right> =
-1 \eea 
with a positive constant $\Lambda$. Let $M_0$ be a connected, compact and spacelike hypersurface with mean curvature
\bea 
H > \sqrt{n\Lambda} 
\eea 
with respect to the past directed normal. Then the inverse mean curvature flow (IMCF) with initial surface $M_0$ exists for all times, i.e. given a manifold $M=M^n$ and an embedding
\bea 
x_0 : M \longrightarrow N 
\eea 
with $x_0(M) = M_0$, there is a 
\bea 
x: [0, \infty) \times M \longrightarrow N 
\eea 
with
\begin{itemize}
\item $x(t, \cdot)$ embedding of a spacelike hypersurface $M(t) = x(t, \cdot)\ka M\kz$ with positive mean curvature
\item  $x(0,\cdot) = x_0$
\item $\dot x(t, \xi) = -H^{-1}\nu$, where $\nu$ is the past directed normal of $M(t)$ in $x(t, \xi)$ and 
$H$ the mean curvature of $M(t)$ in $x(t, \xi)$ with respect to $\nu$.
\end{itemize}

The hypersurfaces $M(t)$, $t>0$, provide a foliation of the future $I^+\ka M_0\kz$ of $M_0$ and there holds
\bea 
\left|M(t)\right| = \left|M_0\right|e^{-t} \q \forall \ t \ge 0. 
\eea
Furthermore the evolution parameter $t$ can be used as new time function in the future
$I^+\ka M_0 \kz$ of $M_0$.
\end{theorem}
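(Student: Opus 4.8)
The plan is to adapt Gerhardt's graph-based treatment of the flow from \cite{CG}, isolating every place where the timelike convergence condition \eqref{zeitartige_Kon} is used and checking that it can be replaced by the energy condition \eqref{ricci} together with an a priori lower bound on the mean curvature that the threshold $H > \sqrt{n\Lambda}$ furnishes. First I would pass to a Gaussian coordinate system $(x^0, x^i)$ associated with the globally hyperbolic structure, in which the metric reads $d\bar s^2 = e^{2\psi}\ka -(dx^0)^2 + \sigma_{ij}\, dx^i dx^j \kz$. Since $N$ has a compact Cauchy hypersurface $S_0$, every compact connected spacelike hypersurface confined to a compact slab is a graph over $S_0$, so I may write $M(t) = \graph u(t, \cdot)$; under this representation the flow $\dot x = -H^{-1}\nu$ becomes a single scalar parabolic equation $\dt u = -\tilde v\, H^{-1}$, where $\tilde v = \ka 1 - \sigma^{ij} u_i u_j\kz^{-1/2}$ is the tilt function and $H = H(u, Du, D^2 u)$ depends on the graph to second order. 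Because $H > 0$ on $M_0$, the equation is uniformly parabolic as long as $H$ is bounded away from zero and $\tilde v$ stays bounded, and short-time existence of a spacelike solution follows from standard parabolic theory.

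The heart of the matter, and the step I expect to be the main obstacle, is the a priori lower bound for the mean curvature. Here I would use the evolution equation of $H$ along the flow, which has the schematic form
\bea \dt H - H^{-2}\Delta H = -2H^{-3}|\n H|^2 + H^{-1}\ka |A|^2 + \R_{\al\be}\nu^\al\nu^\be\kz, \eea
where $|A|^2$ is the squared norm of the second fundamental form. On a spacelike hypersurface the induced metric is Riemannian, so the Cauchy--Schwarz inequality yields $|A|^2 \ge n^{-1} H^2$, while \eqref{ricci} bounds the ambient curvature term from below by $-\Lambda$; hence the zeroth-order term is at least $H^{-1}\ka n^{-1}H^2 - \Lambda\kz = (nH)^{-1}\ka H^2 - n\Lambda\kz$. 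This is exactly where the threshold enters: setting $m(t) = \min_{M(t)} H$, at a spatial minimum one has $\n H = 0$ and $\Delta H \ge 0$, so $m'(t) \ge (n\,m(t))^{-1}\ka m(t)^2 - n\Lambda\kz$. As long as $m(t) > \sqrt{n\Lambda}$ the right-hand side is positive, so $m$ is nondecreasing and the inequality $H > \sqrt{n\Lambda}$ propagates from $M_0$ to all later times. This single estimate keeps the flow speed $H^{-1}$ bounded and guarantees that each $M(t)$ remains spacelike with positive mean curvature, which is precisely the role the timelike convergence condition played in \cite{CG}.

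With this lower bound secured, the remaining estimates follow the scheme of \cite{CG}, except that the strong volume decay condition, rather than the discarded mean curvature barrier condition, must now supply the confinement. I would use it to produce an upper barrier in the future, pinning $u$ from above and delivering the $C^0$ estimate, the lower bound being $M_0$ itself since the flow is future directed; then a gradient estimate bounding $\tilde v$ via the maximum principle applied to its evolution equation, where the relaxed curvature term is again absorbed using the bound on $H$. Once $H$ is bounded below and $\tilde v$ above, the scalar equation is uniformly parabolic with bounded coefficients, so Krylov--Safonov and Schauder theory give uniform $C^{2,\alpha}$ and all higher-order estimates, and a continuation argument promotes short-time to longtime existence on $[0,\infty)$. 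Finally the area identity $|M(t)| = |M_0| e^{-t}$ is immediate from $\tfrac{d}{dt}\, d\mu = -d\mu$, a direct consequence of $\dot x = -H^{-1}\nu$, while the foliation of $I^+(M_0)$ and the use of $t$ as a time function are obtained exactly as in \cite{CG}, the point being that none of these arguments ever needed the mean curvature barrier condition once the strong volume decay condition and the preserved bound $H > \sqrt{n\Lambda}$ are in hand.
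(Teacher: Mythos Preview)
Your treatment of the mean–curvature lower bound is essentially the paper's: you work with $H$ while the paper works with $H^{-1}$, but the resulting differential inequality is the same. In fact your inequality $m'(t)\ge (nm)^{-1}(m^2-n\Lambda)$ gives more than you extract from it: integrating yields $m(t)^2-n\Lambda\ge (m(0)^2-n\Lambda)e^{2t/n}$, so $H\ge c_0 e^{t/n}$ with $c_0>0$, which is exactly the content of the paper's Lemmas~\ref{exponentiell} and~\ref{exponentiell_besser}. You only record ``$m$ is nondecreasing'', and this understatement is what produces the real gap below.

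The genuine gap is the foliation claim. You write that the foliation of $I^+(M_0)$ ``is obtained exactly as in \cite{CG}'' and that ``none of these arguments ever needed the mean curvature barrier condition''. This is precisely the place where \cite{CG} \emph{does} use the barrier: it is invoked to show that the flow hypersurfaces run into the future singularity, i.e.\ that $\inf_{S_0}u(t,\cdot)\to b$, which is what forces the $M(t)$ to sweep out all of $I^+(M_0)$ rather than accumulate on some intermediate hypersurface. The paper's replacement for the barrier is Lemma~\ref{lemma_lower_order_estimates}: from the exponential growth $H|_{M(t)}\ge c_0 e^{t/n}$ one has, for every coordinate slice $\{x^0=s\}$, eventually $\inf_{M(t)}H>\sup_{\{x^0=s\}}H$, and a mean–curvature comparison principle (\cite[Lemma~4.7.1]{CP}) then forces $u(t,\cdot)>s$. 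Your proposal contains the ingredients for this (the exponential lower bound is hidden in your ODE), but you neither state the growth nor supply the comparison argument, and you misattribute the step to \cite{CG}. A secondary confusion: the strong volume decay condition in \cite{CG} is used to bound $H$ (hence $|A|$) from above, not to manufacture a $C^0$ upper barrier for $u$; the finite–time $C^0$ control comes simply from the bounded flow speed $H^{-1}\le (\inf_{M_0}H)^{-1}$.
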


\begin{theorem} \label{thm2}
Let $t$ be the time function in $I^+(M_0)$ according to Theorem \ref{Hauptresultat}. If we define a second time function by setting
\bea 
\tau = 1- e^{-\frac{t}{n}}
\eea 
and use the obvious notation $M(t)=M(\tau)$
there holds
 \bea 
 |M(\tau)| = |M_0|(1-\tau)^{n} 
 \eea 
 for $0\le \tau < 1$. Furthermore there is a $0< \tau_0=\tau_0(n, \Lambda, M_0)<1$ and a constant $c=c(n, \Lambda, M_0)>0$ such that the length $L(\gamma)$ of a future directed timelike curve starting at 
$M(\tau)$ is bounded from above 
\bea 
\label{Laenge} L(\gamma) \le
c(1-\tau) \q\forall \tau_0\le \tau < 1. 
\eea 
\end{theorem}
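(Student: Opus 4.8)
The first identity is an immediate change of variables. By Theorem~\ref{Hauptresultat} the flow leaves satisfy $|M(t)| = |M_0|e^{-t}$, and since $\tau = 1-e^{-t/n}$ gives $e^{-t} = (e^{-t/n})^n = (1-\tau)^n$, we obtain $|M(\tau)| = |M_0|(1-\tau)^n$ for $0\le\tau<1$. The real content is thus the length estimate~\eqref{Laenge}.

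For that estimate the plan is to use that, by Theorem~\ref{Hauptresultat}, $t$ is a time function whose level sets are exactly the leaves $M(t)$ and which foliates $I^+(M_0)$. Since the flow trajectories $t\mapsto x(t,\xi)$ run in the direction $-H^{-1}\nu$, they are orthogonal to the leaves with $\langle \dot x,\dot x\rangle = -H^{-2}$; hence in the Gaussian coordinates adapted to the foliation the ambient metric splits as $d\bar s^2 = -H^{-2}\,dt^2 + \sigma_{ij}\,dx^i dx^j$, so that $H^{-1}$ is the lapse. A future directed timelike curve $\gamma$ issuing from $M(\tau)$ is strictly increasing in $t$, and using $-\langle\dot\gamma,\dot\gamma\rangle \le H^{-2}\dot t^2$ and parametrising by $t$ I would obtain
\[
L(\gamma) = \int \sqrt{-\langle\dot\gamma,\dot\gamma\rangle}\,ds \;\le\; \int_{t(\tau)}^{\infty} H^{-1}\,dt,
\]
where $t(\tau)$ denotes the flow parameter corresponding to $\tau$ and the upper limit has been enlarged to $\infty$ using $H^{-1}>0$.

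The heart of the matter is an a~priori lower bound $H\ge c_0\,e^{t/n}$ along the flow. I would extract it from the evolution equation of the mean curvature, which for the IMCF $\dot x=-H^{-1}\nu$ takes the form
\[
\dot H - \frac{1}{H^2}\Delta H = -\frac{2}{H^3}|\nabla H|^2 + \frac{1}{H}\bigl(|A|^2 + \R_{\al\be}\nu^\al\nu^\be\bigr),
\]
with signs according to the Lorentzian conventions of \cite{CG}. Applying the parabolic maximum principle to $\phi(t)=\inf_{M(t)}H$, where $\nabla H=0$ and $\Delta H\ge 0$ at a spatial minimum, and inserting the algebraic inequality $|A|^2\ge \tfrac1n H^2$ together with the energy condition $\R_{\al\be}\nu^\al\nu^\be\ge-\Lambda$, gives the differential inequality
\[
\frac{d}{dt}\,\phi^2 \;\ge\; \frac{2}{n}\,\phi^2 - 2\Lambda .
\]
Here the threshold matches the hypothesis exactly: since $\phi(0)^2=\inf_{M_0}H^2>n\Lambda$, the quantity $\phi^2-n\Lambda$ stays positive and obeys $(\phi^2-n\Lambda)'\ge\tfrac2n(\phi^2-n\Lambda)$, so that $\phi(t)^2\ge(\inf_{M_0}H^2-n\Lambda)\,e^{2t/n}$, i.e.\ $H\ge c_0\,e^{t/n}$ with $c_0=(\inf_{M_0}H^2-n\Lambda)^{1/2}=c_0(n,\Lambda,M_0)>0$.

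Combining the two steps yields
\[
L(\gamma)\;\le\;\int_{t(\tau)}^{\infty} c_0^{-1}e^{-t/n}\,dt \;=\; \frac{n}{c_0}\,e^{-t(\tau)/n} \;=\; \frac{n}{c_0}\,(1-\tau),
\]
which is \eqref{Laenge} with $c=n/c_0$. One fixes $\tau_0<1$ (equivalently $t(\tau_0)$ large) to make the stated constant clean and to absorb the lower order gradient contributions suppressed above, which only become negligible for large $t$. The step I expect to be the main obstacle is the rigorous derivation of the Lorentzian evolution equation for $H$ with the correct signs and the accompanying maximum principle argument for $\phi^2$; once the growth rate $H\gtrsim e^{t/n}$ is established, identifying $H^{-1}$ as the lapse and comparing $L(\gamma)$ with $\int H^{-1}\,dt$ is comparatively routine.
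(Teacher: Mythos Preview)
Your argument is correct but differs from the paper on both substantive points. For the growth $H\ge c_0e^{t/n}$, the paper argues in two stages (Lemmas~\ref{exponentiell} and~\ref{exponentiell_besser}): first a weak bound $H\ge e^{\epsilon t}\inf_{M_0}H$ for some small $\epsilon>0$ via the maximum principle applied to $H^{-1}e^{\epsilon t}$, then a bootstrap to the sharp rate using that $H^{-2}\Lambda$ has become integrable; your direct ODE comparison for $(\inf_{M(t)}H)^2$ reaches the sharp rate in one step with the explicit constant $c_0=(\inf_{M_0}H^2-n\Lambda)^{1/2}$. For the length bound, the paper does not exploit the foliation structure at all but applies the Andersson--Galloway generalisation of Hawking's singularity theorem (Theorem~\ref{Hawking_allgemein}), which yields $L(\gamma)\le nH_0/(H_0^2-n\Lambda)$ from the leafwise bound $H\ge H_0>\sqrt{n\Lambda}$; the threshold $\tau_0$ appears precisely to guarantee this hypothesis. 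Your lapse argument is more self-contained and in fact delivers $L(\gamma)\le(n/c_0)(1-\tau)$ for every $\tau\in[0,1)$, so your closing remark that $\tau_0$ is needed to absorb ``gradient contributions'' is off---those vanish at the spatial minimum of $H$ and no $\tau_0$ is required in your route. The trade-off is that your bound relies on the entire future foliation supplied by Theorem~\ref{Hauptresultat}, whereas the paper's black-box singularity theorem needs only the single leaf $M(\tau)$.
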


\begin{rem}
Theorem \ref{thm2} shows that the quantity $1-\tau$ can be interpreted as the radius of the coordinate slices $\{\tau = const\}$ and as a measure for the remaining life span of the spacetime.
\end{rem}

To prove the above theorems we use a small but effective modification of the proof in \cite{CG}. We will only point out the differences between our proof and the one in \cite{CG}.

\section{Proof of Theorem \ref{Hauptresultat}}
We remark that under the assumptions of Theorem  \ref{Hauptresultat} the initial surface $M_0$ is a graph over $S_0$, cf. Section \ref{appendix}.

We need an evolution equation for $H^{-1}$.
\begin{lem} \label{Evolution_H_hoch_minus_eins} 
We have
 \bea
   \frac{d}{dt}\ka H^{-1}\kz- H^{-2} \Delta H^{-1} = - H^{-2}
\ka {\|A\|}^2+ {\bar R}_{\al\be}{\nu}^{\al}{\nu}^{\be}\kz H^{-1},
\eea 
where
\bea
 {\|A\|}^2 =h_{ij}h^{ij}. 
\eea
\end{lem}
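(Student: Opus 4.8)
The plan is to obtain the equation as the specialization to $f = H^{-1}$ of the general evolution equation for the mean curvature under a flow of the form $\dot x = -f\nu$, followed by an application of the chain rule. First I would recall the standard structural equations for such a flow in a Lorentzian ambient manifold: the induced metric evolves by $\dot g_{ij} = -2f h_{ij}$, and a computation relying on the Gauss and Codazzi equations together with the commutation of covariant derivatives yields the evolution equation for the mean curvature
\bea
\dot H = -\Delta f + \ka {\|A\|}^2 + \bar R_{\al\be}\nu^\al\nu^\be\kz f,
\eea
where $\Delta$ is the Laplace--Beltrami operator of the time-dependent induced metric and the Ricci term arises from contracting the ambient curvature tensor against the timelike normal $\nu$. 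This is the Lorentzian analogue of the familiar Riemannian formula and is available from \cite{CG} and the references therein.

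Since the flow preserves positive mean curvature (a fact used throughout and ultimately guaranteed by the hypothesis $H > \sqrt{n\Lambda}$ on $M_0$), the quantity $H^{-1}$ is a smooth, strictly positive function along the flow, so the chain rule gives
\bea
\frac{d}{dt}\ka H^{-1}\kz = -H^{-2}\dot H.
\eea
Substituting the IMCF speed $f = H^{-1}$ into the evolution equation for $H$ and inserting the result yields
\bea
\frac{d}{dt}\ka H^{-1}\kz = -H^{-2}\ka -\Delta H^{-1} + \ka{\|A\|}^2 + \bar R_{\al\be}\nu^\al\nu^\be\kz H^{-1}\kz,
\eea
and moving the Laplacian term to the left-hand side produces exactly the asserted identity. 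Note that because the Laplacian already acts on $f = H^{-1}$ in the evolution equation for $H$, there is no need to convert between $\Delta H$ and $\Delta H^{-1}$: the parabolic operator appears directly in the correct variable, and the resulting equation has the good sign $-H^{-2}\Delta H^{-1}$ in front of the second-order term.

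The main point requiring care is the bookkeeping of signs in the Lorentzian setting. Because the normal is timelike with $\left<\nu,\nu\right> = -1$, the Gauss equation carries a sign opposite to the Riemannian case, and one must verify that the terms ${\|A\|}^2$ and $\bar R_{\al\be}\nu^\al\nu^\be$ emerge with precisely the signs that render the final equation parabolic. A related subtlety is that $H = g^{ij}h_{ij}$ depends on the evolving metric, so the contribution of $\dot g_{ij}$ has to be included when differentiating; tracking the term $\dot g^{ij}h_{ij} = 2f{\|A\|}^2$ is part of what produces the ${\|A\|}^2$ factor. I expect no essential difficulty beyond this sign and index bookkeeping, since the computation is otherwise a direct specialization of the known evolution equations to $f = H^{-1}$.
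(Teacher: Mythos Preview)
Your derivation is correct and follows the same route as the paper's reference: the paper's own proof is simply the citation ``See \cite[Lemma 2.2]{CG}'', and the argument there is precisely the one you outline---derive the general evolution equation for $H$ under a flow $\dot x = -f\nu$ in a Lorentzian ambient space, then specialize to $f = H^{-1}$ and apply the chain rule.
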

\begin{proof}
See \cite[Lemma 2.2]{CG}.
\end{proof}
We show that the mean curvature of the leaves of the IMCF grows exponentially fast in time.
\begin{lem} \label{exponentiell} We assume that the IMCF exists on a maximal time interval $[0, T^*)$. Then there exists a constant $0<\epsilon = \epsilon(n, \Lambda, M_0)$ such that
\bea 
\label{mean_curvature_lower_bound} 
H \ge
e^{\epsilon t} \inf_{M_0}H \q\forall 0\le t<T^*. 
\eea
\end{lem}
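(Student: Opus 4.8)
The plan is to combine the evolution equation of Lemma \ref{Evolution_H_hoch_minus_eins} with two pointwise lower bounds for the bracket on its right-hand side, and then to run the parabolic maximum principle on $w := H^{-1}$. First, the elementary trace inequality gives ${\|A\|}^2 = h_{ij}h^{ij} \ge \tfrac{1}{n}(h^i_i)^2 = \tfrac{1}{n}H^2$, while the energy condition \eqref{ricci} yields $\Ricci \nu^\al\nu^\be \ge -\Lambda$. Hence
\[
   {\|A\|}^2 + \Ricci\nu^\al\nu^\be \ge \tfrac{1}{n}H^2 - \Lambda,
\]
and, since $H^{-2} = w^2 > 0$, Lemma \ref{Evolution_H_hoch_minus_eins} turns into the differential inequality
\[
   \dt w - H^{-2}\Delta w \le -\ka \tfrac{1}{n} - \Lambda w^2\kz w .
\]

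I would then test this at the spatial maximum. Setting $w_{\max}(t) := \max_{M(t)} w = \ka \min_{M(t)} H\kz^{-1}$, Hamilton's trick shows $w_{\max}$ is locally Lipschitz, and at a point realizing the maximum one has $\n w = 0$ and $\Delta w \le 0$, so that $H^{-2}\Delta w \le 0$. Evaluating the inequality there gives, in the sense of the upper Dini derivative,
\[
   \tfrac{d}{dt} w_{\max} \le -\ka \tfrac{1}{n} - \Lambda w_{\max}^2\kz w_{\max}.
\]

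The decisive input is the initial pinching $H > \sqrt{n\Lambda}$ on the compact surface $M_0$: it forces $\Lambda w_{\max}(0)^2 < \tfrac{1}{n}$, so that $\ep := \tfrac{1}{n} - \Lambda w_{\max}(0)^2 > 0$ is a positive constant depending only on $n$, $\Lambda$ and $\min_{M_0} H$. Because the right-hand side above is strictly negative for $w \in \ka 0, (n\Lambda)^{-1/2}\kz$, the quantity $w_{\max}$ is strictly decreasing and can never leave this interval; in particular $\Lambda w_{\max}(t)^2 \le \Lambda w_{\max}(0)^2$, hence $\tfrac{1}{n} - \Lambda w_{\max}(t)^2 \ge \ep$ for all $t$. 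Feeding this back yields the linear inequality $\tfrac{d}{dt} w_{\max} \le -\ep\, w_{\max}$, and Gronwall's lemma gives $w_{\max}(t) \le w_{\max}(0) e^{-\ep t}$. Passing to reciprocals returns $\min_{M(t)} H \ge \ka \min_{M_0} H\kz e^{\ep t}$, which is exactly \eqref{mean_curvature_lower_bound}.

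The main obstacle is making the maximum-principle step rigorous rather than any individual estimate: one must justify Hamilton's trick for the time-dependent induced metric and, above all, close the bootstrap so that $\ep$ remains uniformly positive on the whole interval $[0,T^*)$. This rests entirely on the monotonicity of $w_{\max}$, which is why the strict inequality $H > \sqrt{n\Lambda}$ at $t = 0$ is indispensable; were the coefficient $\tfrac{1}{n} - \Lambda w^2$ allowed to degenerate, the exponential rate would be lost.
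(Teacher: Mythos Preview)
Your proof is correct and follows essentially the same approach as the paper: both combine the evolution equation for $H^{-1}$ with the trace inequality $\|A\|^2 \ge \tfrac{1}{n}H^2$ and the energy bound $\Ricci\nu^\al\nu^\be \ge -\Lambda$, then apply the parabolic maximum principle. The only cosmetic difference is that the paper builds the exponential factor into the test function $\phi = H^{-1}e^{\epsilon t}$ and runs a contradiction argument via an auxiliary constant $c\in(\Lambda,\tfrac1n\inf_{M_0}H^2)$, whereas you work with $w=H^{-1}$ directly and extract the rate afterward via monotonicity and Gronwall.
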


\begin{proof}
We define
\bea 
f(t) = \frac{1}{n}\inf_{M(t)}H^2, \q 0 \le t <T^* 
\eea 
and choose $\Lambda < c < f(0)$. 
We show by contradiction that
\bea
\label{groesser_c} f(t)>c 
\eea
 for all $0\le t<T^*$. Assume this is not the case. Due to the continuity of $f$ there is a minimal $0<t_0<T^*$ with $f(t_0)=c$. For the function 
 \bea 
 \phi = H^{-1}e^{\epsilon t}, \q \epsilon = \frac{1}{n}\left(1-\frac{\Lambda}{c}\right),
 \eea 
 we have  for all
$0\le t\le t_0$ 
\begin{equation}\label{maximum} 
\begin{aligned}
\dot \phi - H^{-2}\Delta \phi \le&\
-H^{-2}{\|A\|}^2 \phi +
H^{-2}\Lambda\phi + \epsilon\phi \\
\le&\ -\frac{1}{n} \phi + \frac{\Lambda}{nc}\phi + \epsilon\phi \\
=&\ 0,
\end{aligned}
\end{equation}
cf. Lemma \ref{Evolution_H_hoch_minus_eins}. By applying the maximum principle we conclude
\bea 
\label{abschae} \phi (t, \cdot) \le \sup_{M_0}\phi = \sup_{M_0}H^{-1} 
\eea 
for all $ 0 \le t \le t_0$ and hence
\begin{equation}
\begin{aligned}
f(t_0) \ge&\ \frac{1}{n}e^{2\epsilon t_0} \inf_{M_0}H^2\\
>&\ c, 
\end{aligned}
\end{equation}
which is a contradiction.

With (\ref{groesser_c}) also (\ref{maximum}) and hence
(\ref{abschae}) is true for all $0\le t<T^*$ proving the lemma.
\end{proof}

The lower bound for the mean curvature of the leaves of the IMCF in Lemma \ref{exponentiell} can be improved.

\begin{lem} \label{exponentiell_besser}
We assume that the IMCF exists for all times. Then there exists a constant $c_0= c_0(n, \Lambda, M_0)>0$ such that 
\bea
H \ge c_0 e^{\frac{t}{n}}
\eea
during the evolution.
\end{lem}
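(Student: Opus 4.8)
The plan is to exploit the evolution equation for $H^{-1}$ from Lemma \ref{Evolution_H_hoch_minus_eins} more carefully than in Lemma \ref{exponentiell}, keeping the full nonlinearity instead of freezing it at a fixed threshold $c$. Writing $w=H^{-1}$ and combining the pointwise estimates $\|A\|^2\ge\frac1n H^2$ (Cauchy--Schwarz: $h_{ij}h^{ij}\ge\frac1n (h_i^i)^2$) with the energy condition $\bar R_{\al\be}\nu^\al\nu^\be\ge-\Lambda$, one gets from Lemma \ref{Evolution_H_hoch_minus_eins}, using $H^{-2}w=w^3$,
\bea
\dot w - H^{-2}\Delta w \le -H^{-2}\ka \tfrac1n H^2-\Lambda\kz w = -\tfrac1n w + \Lambda w^3.
\eea
The point is that the ``bad'' term $\Lambda w^3$ is cubic in $w$, hence negligible once $w=H^{-1}$ is small.

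Next I would pass to a scalar ODE inequality. Set $m(t)=\sup_{M(t)}H^{-1}$; this is a Lipschitz function of $t$, and by Hamilton's trick its almost-everywhere derivative is realized at a spatial maximum point of $w$, where $\Delta w\le0$. Dropping the nonnegative term $-H^{-2}\Delta w$ yields
\bea
\dot m \le -\tfrac1n m + \Lambda m^3 \q\text{for a.e. } t\ge0.
\eea
Now substitute $u(t)=m(t)e^{t/n}$, so that $\dot u=\ka\dot m+\tfrac1n m\kz e^{t/n}\le \Lambda m^3 e^{t/n}=\Lambda m^2 u$. This is a linear differential inequality for $u$ whose coefficient $\Lambda m^2$ is integrable over $[0,\infty)$: indeed Lemma \ref{exponentiell} gives $m(t)\le e^{-\epsilon t}/\inf_{M_0}H$ with $\epsilon=\epsilon(n,\Lambda,M_0)>0$, so
\bea
\int_0^\infty \Lambda m(s)^2\,ds \le \frac{\Lambda}{2\epsilon\,(\inf_{M_0}H)^2} =: K(n,\Lambda,M_0)<\infty.
\eea

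Gronwall's lemma then yields $u(t)\le u(0)e^{K}$, i.e. $m(t)=\sup_{M(t)}H^{-1}\le \ka\inf_{M_0}H\kz^{-1}e^{K}\,e^{-t/n}$, which is exactly $H\ge c_0 e^{t/n}$ with $c_0=\inf_{M_0}H\cdot e^{-K}$ depending only on $n$, $\Lambda$ and $M_0$. The only points needing care are the justification of differentiating the spatial supremum via Hamilton's lemma (standard for such parabolic inequalities) and the finiteness of the Gronwall factor; the latter is precisely where the already-established exponential decay of $m$ from Lemma \ref{exponentiell} is indispensable, since it turns the cubic error term into an integrable perturbation of the sharp rate $e^{t/n}$.
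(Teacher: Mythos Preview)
Your proof is correct and follows essentially the same route as the paper: both derive a parabolic inequality for $H^{-1}e^{t/n}$, pass to the spatial supremum via Hamilton's trick, and use the cruder exponential bound of Lemma \ref{exponentiell} to make the residual term $H^{-2}\Lambda$ (equivalently your $\Lambda m^2$) integrable in $t$, then integrate (Gronwall). The only cosmetic difference is the order of operations---the paper multiplies by $e^{t/n}$ first and then takes the supremum, whereas you take the supremum $m(t)=\sup H^{-1}$ first and substitute $u=me^{t/n}$ afterwards---but this changes nothing of substance.
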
 
\begin{proof}
We define
\bea
w= H^{-1}e^{\frac{1}{n}t}
\eea
for all $t\ge 0$ and similar to inequality (\ref{maximum}) we get
\bea \label{1234}
\dot w - H^{-2}\Delta w \le H^{-2}\Lambda w.
\eea
The function
\bea
\varphi (t) = \sup w(t, \cdot)
\eea
is lipschitz continuous, cf. \cite[Lemma 3.2]{CG},  and for a.e. $t>0$ there holds
\bea
\dot \varphi(t) = \dot w(t, x_t),
\eea
where $x_t$ is an arbitrary point, in which the supremum is attained. Hence for a.e. $t>0$ we have in view of (\ref{1234}) and Lemma \ref{exponentiell}
\bea
\dot \varphi(t) \le & H^{-2}\Delta w + H^{-2}\Lambda w \\
 \le & H^{-2}\Lambda w \\
 \le & c e^{-2 \epsilon t} \varphi,
\eea
where the right side of the inequality is evaluated at $(t, x_t)$. Integration of the last inequality yields 
\beq
\varphi \le c,
\eeq
which proves the lemma.

\end{proof}

In \cite{CG} the assumption of barriers was used to show that the flow hypersurfaces of the IMCF run into the future singularity provided the flow exists for all times. In our proof we don't need barriers.

\begin{lem} \label{lemma_lower_order_estimates}
Let $N$ be as in Theorem \ref{Hauptresultat} and 
$S_0$, $(x^{\al})$ the corresponding Gaussian coordinate system, especially
\bea 
N = (a,b)\times S_0, \quad  a<0<b,
\eea 
as a topological product, $x^0$ a global defined time function, $(x^i)$ local coordinates for $S_0$ and $\{x^0=0\}$ corresponds to $S_0$.

If the IMCF exists for all times and if the leaves $M(t)$ of the IMCF  are graphs, i.e.
\bea 
\label{graph} M(t) = \left\{(x^0, x): x^0 = u(t,x),  \ x \in S_0\right\} 
\eea 
with a function $u\in C^{\infty}([0, \infty)\times S_0)$, then there holds
\bea
\label{beh} \lim_{t\rightarrow \infty} \inf_{S_0}u(t,\cdot) = b.
\eea
\end{lem}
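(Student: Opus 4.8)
The plan is to argue by monotonicity and contradiction, feeding in the all-time existence, the exponential area decay $|M(t)| = |M_0|e^{-t}$, and the mean curvature growth of Lemma \ref{exponentiell_besser}. First I would note that the flow is future directed, so in the graph representation $\partial_t u > 0$ (cf. \cite{CG}); hence $t \mapsto \inf_{S_0} u(t,\cdot)$ is nondecreasing and bounded above by $b$, so it converges to some $\beta \le b$, and (\ref{beh}) is precisely the statement $\beta = b$. (It is worth recording that $\beta = b$ then forces $u(t,\cdot) \to b$ uniformly, since $\inf_{S_0} u(t,\cdot) \le u(t,\cdot) < b$; this is why controlling the infimum suffices.) Suppose, for contradiction, that $\beta < b$. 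Then $\inf_{S_0} u(t,\cdot) \le \beta$ for every $t$, and since $S_0$ is compact and $u(t,\cdot)$ smooth we may pick $x_t \in S_0$ realizing the spatial minimum, so that $u(t,x_t) \le \beta < b$ and $Du(t,x_t) = 0$.

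The core of the argument is then a uniform positive lower bound for the areas $|M(t)|$, which will contradict $|M(t)| = |M_0|e^{-t} \to 0$. Because $u(t,x_t) \le \beta$, the lowest part of each leaf remains in the fixed compact set $K = \{a < x^0 \le \beta'\}\times S_0 \Subset N$ for some $\beta < \beta' < b$. Granting a uniform tilt bound $\tilde v \le L$ on the portion of the flow lying in $K$ (here $\tilde v$ is the usual quantity measuring the deviation of the graph from a coordinate slice, with $\tilde v = 1$ where $Du = 0$), the bound gives $u(t,\cdot) \le \beta + L\,d(\cdot,x_t)$, so the sublevel set $\{u(t,\cdot) \le \beta'\}$ contains a geodesic ball $B_r(x_t) \subset S_0$ of fixed radius $r = r(\beta'-\beta,L) > 0$. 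On $B_r(x_t)$ the leaf stays in $K$ with tilt $\le L$, so its induced area density is bounded below by a positive constant depending only on $K$ and $L$ (the explicit form of the area element is in \cite{CG}); integrating over $B_r(x_t)$ yields $|M(t)| \ge c_1 > 0$ for all $t$, the desired contradiction, so $\beta = b$.

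The hard part will be exactly the uniform gradient estimate $\tilde v \le L$, uniform in $t$, on the part of the flow contained in $K$. In \cite{CG} the confinement underlying such an estimate was supplied by the mean curvature barrier condition, which we have dropped; the one substantive modification is to recover it instead from the all-time existence together with the growth $H \ge c_0 e^{t/n}$ of Lemma \ref{exponentiell_besser}. Concretely, I would apply the maximum principle to the evolution equation for $\tilde v$ along the flow, using the energy condition $\bar R_{\alpha\beta}\nu^\alpha\nu^\beta \ge -\Lambda$ to absorb the ambient curvature term and Lemma \ref{exponentiell_besser} to dominate the zero-order contributions while the relevant part of the flow remains in $K$. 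Once this estimate is available, the remaining steps are the elementary area comparison above.
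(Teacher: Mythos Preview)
Your approach is correct in spirit but takes a considerably longer route than the paper, and the step you yourself flag as ``the hard part'' is where the argument is genuinely incomplete.

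\textbf{What the paper does.} The paper's proof is three lines and uses no gradient estimate and no area argument. Fix $s\in(a,b)$. The coordinate slices $\{x^0=\tau\}$ for $\tau$ in the compact range $[\inf_{S_0}u(0,\cdot),\,s]$ have uniformly bounded mean curvature. By Lemma~\ref{exponentiell_besser} there is $t_0$ with $\inf_{M(t)}H>\sup_{\{x^0=s\}}H$ for all $t\ge t_0$. A standard touching/comparison principle for spacelike hypersurfaces (the paper cites \cite[Lemma~4.7.1]{CP}) applied at the spatial minimum of $u(t,\cdot)$ then forces $u(t,\cdot)>s$ for all $t\ge t_0$. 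Since $s<b$ was arbitrary, $\inf_{S_0}u(t,\cdot)\to b$. In other words, the exponential growth of $H$ alone drives the leaves past every coordinate slice; no control on $\tilde v$ or on areas is needed.

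\textbf{Where your proposal is thin.} Your contradiction scheme reduces everything to a uniform bound $\tilde v\le L$ on $M(t)\cap K$, but the mechanism you offer for it is not quite sufficient. The evolution inequality for $\tilde v$ is a global parabolic inequality on $M(t)$; the ambient curvature coefficients entering it are only bounded on $K$, and nothing in your hypotheses prevents $\sup_{S_0}u(t,\cdot)\to b$, so the maximum of $\tilde v$ may well be attained \emph{outside} $K$, where those coefficients can blow up. The phrase ``while the relevant part of the flow remains in $K$'' hides a genuine localization step (e.g.\ multiplying by a cutoff in $x^0$ and controlling the extra terms, or a barrier construction), which is feasible but is additional work you have not done. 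Note also that the energy condition $\bar R_{\alpha\beta}\nu^\alpha\nu^\beta\ge-\Lambda$ by itself does not bound all the ambient curvature quantities that appear in the $\tilde v$--equation; those bounds really come from compactness of $K$.

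\textbf{Comparison.} Both arguments ultimately exploit Lemma~\ref{exponentiell_besser}. The paper feeds the mean curvature growth directly into an elliptic comparison with coordinate slices, which is immediate. Your route converts it into a tilt bound and then an area lower bound contradicting $|M(t)|=|M_0|e^{-t}$; this is a reasonable strategy, but it costs you a localized $C^1$--estimate that the paper never needs.
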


\begin{proof}
For every $s \in (a,b)$, there is a $t_0 >0$  such that
\bea
\inf_{M(t)}H > \sup_{\{x^0=s\}}H 
\eea
for all $t \ge t_0$, cf. Lemma \ref{exponentiell_besser}. Applying \cite[Lemma 4.7.1]{CP} we conclude in view of Lemma \ref{exponentiell} that
\bea
u(t, \cdot) > s 
\eea
for all $t\ge t_0$. 
\end{proof}

The remaining part of Theorem \ref{Hauptresultat} is proved exactly as in \cite{CG}. 

\section{Proof of Theorem \ref{thm2}}

We need the following generalization by Anderson-Galloway, cf. \cite[Proposition
3.3]{Galloway}, of a singularity theorem of Hawking, cf.
\cite[Theorem 4, p. 272]{Hawking},  for globally hyperbolic
Lorentzian manifolds satisfying the timelike convergence condition to the case of globally hyperbolic Lorentzian manifolds satisfying (\ref{ricci}). 

\begin{theorem} \label{Hawking_allgemein}
Let $N=N^{n+1}$ be globally hyperbolic and let the Ricci tensor satisfy (\ref{ricci}). Let  $M$ be a compact,
spacelike and achronal hypersurface with mean curvature (with respect to past directed normal) 
\bea 
H\ge H_0 >\sqrt{n\Lambda}.
\eea 
Then the length $L(\ga)$ of an arbitrary future directed timelike curve $\ga$ starting on $M$
is bounded from above 
\bea \label{Laenge_kleiner} 
L(\ga) \le \frac{nH_0}{H_0^2-n\Lambda}. 
\eea
\end{theorem}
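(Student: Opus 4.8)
The plan is to run the focusing argument underlying Hawking's theorem, adapted to the weakened curvature bound $\bar R_{\alpha\beta}\nu^\alpha\nu^\beta \ge -\Lambda$. I would consider the congruence of future directed timelike geodesics issuing orthogonally from $M$, parametrized by arclength $s$, and let $\theta = \theta(s)$ be its expansion. In the conventions of the paper its initial value on $M$ is $\theta(0) = -H \le -H_0 < 0$, so the congruence starts out strongly converging. The first goal is to show, purely by an ODE comparison along each such geodesic, that a focal point of $M$ must occur no later than proper time
\[
s_0 = \frac{nH_0}{H_0^2 - n\Lambda},
\]
and then to invoke the Lorentzian maximization theory to turn this into the length bound (\ref{Laenge_kleiner}) for arbitrary future directed timelike curves.

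For the focusing step the Raychaudhuri equation for the hypersurface-orthogonal congruence reads
\[
\dot\theta = -\tfrac{1}{n}\theta^2 - \sigma^2 - \bar R_{\alpha\beta}\dot\gamma^\alpha\dot\gamma^\beta,
\]
where $\sigma^2 \ge 0$ is the squared shear and the vorticity vanishes. Discarding $-\sigma^2$ and using $\bar R_{\alpha\beta}\dot\gamma^\alpha\dot\gamma^\beta \ge -\Lambda$ gives $\dot\theta \le -\tfrac{1}{n}\theta^2 + \Lambda$. Setting $f = -\theta \ge 0$ this becomes
\[
\dot f \ge \tfrac{1}{n}f^2 - \Lambda, \qquad f(0) = H \ge H_0 > \sqrt{n\Lambda}.
\]
Since the right-hand side is positive as long as $f \ge H_0$, a continuity argument shows $f$ is nondecreasing and stays $\ge H_0$; hence $\dot f \ge \tfrac{1}{n}f^2\ka 1 - n\Lambda/H_0^2\kz = k f^2$ with $k = (H_0^2 - n\Lambda)/(nH_0^2) > 0$. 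Dividing by $f^2$ and integrating yields $\tfrac{d}{ds}(f^{-1}) \le -k$, so $f^{-1}(s) \le H_0^{-1} - ks$, forcing $f \to \infty$ (that is, $\theta \to -\infty$) at some $s^* \le (kH_0)^{-1} = s_0$. A blow-up of $\theta$ to $-\infty$ is exactly the statement that the associated Jacobi tensor degenerates, i.e. a focal point of $M$ occurs at $s^* \le s_0$; and if the geodesic cannot even be extended to $s_0$ there is nothing to prove.

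To conclude, I would use that $N$ is globally hyperbolic and $M$ is compact, spacelike and achronal: then for every $p \in I^+(M)$ the Lorentzian distance $d(M,p)$, the supremum of $L(\gamma)$ over future directed causal curves $\gamma$ from $M$ to $p$, is finite and realized by a future directed timelike geodesic that meets $M$ orthogonally and has no focal point of $M$ in its interior. By the previous step such a maximizing geodesic has length at most $s_0$, whence $d(M,p) \le s_0$ for all $p \in I^+(M)$. Since the length of any future directed timelike curve $\gamma$ starting on $M$ is bounded by $d(M,\cdot)$ evaluated at its endpoint, this gives exactly $L(\gamma) \le s_0 = \tfrac{nH_0}{H_0^2 - n\Lambda}$.

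The routine part is the Riccati estimate above; the part demanding care is the last step. It rests on the standard but nontrivial Lorentzian comparison machinery: existence of maximal geodesics from $M$ in globally hyperbolic spacetimes, orthogonal incidence at $M$, and the theorem that a timelike geodesic stops maximizing arclength from $M$ once it passes a focal point. Here achronality and compactness of $M$, together with global hyperbolicity, are precisely what make the normal exponential map and the distance function $d(M,\cdot)$ well behaved. This bookkeeping is exactly the content of the Anderson--Galloway result, so I would ultimately cite \cite[Proposition 3.3]{Galloway} for the focal-point/maximization part while supplying the Raychaudhuri computation to see that the weakened condition (\ref{ricci}) still yields focusing within proper time $s_0$.
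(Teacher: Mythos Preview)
Your sketch is correct: the Riccati comparison from Raychaudhuri under the weakened bound $\bar R_{\alpha\beta}\nu^\alpha\nu^\beta\ge-\Lambda$ forces a focal point along each normal geodesic within proper time $nH_0/(H_0^2-n\Lambda)$, and the global-hyperbolicity/achronality machinery then converts this into the stated length bound. The paper itself gives no argument here at all --- it simply cites \cite[Theorem 1.9.23]{CP} --- so your proposal is not a different route but rather an unpacking of the standard Anderson--Galloway argument that sits behind that citation.
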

\begin{proof}
See \cite[Theorem 1.9.23]{CP}.
\end{proof}

We need the evolution equation for the induced metric $g_{ij}$ of the flow hypersurfaces $M(t)$.
\begin{lem}\label{evol_gij}
There holds
\bea
\dot g_{ij}=-2 H^{-1}h_{ij},
\eea
where $h_{ij}$ denotes the second fundamental form of the flow hypersurfaces.
\end{lem}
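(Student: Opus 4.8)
The plan is to differentiate the defining relation of the induced metric along the flow. Writing $x_i = \partial_i x$ for the coordinate tangent fields of $M(t)$, we have $g_{ij} = \braket{x_i, x_j}$, where $\braket{\cdot,\cdot}$ denotes the ambient Lorentzian metric. Since the Levi-Civita connection $\bar\nabla$ of $N$ is metric, differentiation in $t$ gives
\[
\dot g_{ij} = \braket{\bar\nabla_t x_i, x_j} + \braket{x_i, \bar\nabla_t x_j},
\]
so that everything reduces to computing $\bar\nabla_t x_i$.

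First I would commute the time and space derivatives. As $x$ is a map on $[0,\infty)\times M$, the coordinate fields $\partial_t$ and $\partial_i$ commute, and $\bar\nabla$ is torsion free, so $\bar\nabla_t x_i = \bar\nabla_t \partial_i x = \bar\nabla_i \partial_t x = \bar\nabla_i \dot x$. Inserting the flow equation $\dot x = -H^{-1}\nu$ yields
\[
\bar\nabla_i \dot x = -(\partial_i H^{-1})\,\nu - H^{-1}\,\bar\nabla_i \nu .
\]
Pairing with $x_j$ and using that $\nu$ is normal, $\braket{\nu, x_j} = 0$, the first term drops, leaving $\braket{\bar\nabla_t x_i, x_j} = -H^{-1}\braket{\bar\nabla_i\nu, x_j}$.

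Next I would invoke the Weingarten equation of \cite{CP}, namely $\bar\nabla_i\nu = h^k_i x_k$, which gives $\braket{\bar\nabla_i\nu, x_j} = h^k_i g_{kj} = h_{ij}$. Substituting this and adding the two equal contributions to $\dot g_{ij}$ (they coincide by symmetry of $h_{ij}$) produces the factor of two and the asserted identity $\dot g_{ij} = -2H^{-1}h_{ij}$.

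The computation is short; the only points requiring care are the commutation $\bar\nabla_t x_i = \bar\nabla_i\dot x$, which hinges on the vanishing torsion of $\bar\nabla$ together with $[\partial_t,\partial_i]=0$, and the consistent use of the sign conventions of \cite{CP} for the past directed normal $\nu$, for the second fundamental form $h_{ij}$, and for the factor $\braket{\nu,\nu} = -1$, all of which together fix the final sign. Equivalently, and perhaps matching the style of the references cited for the preceding lemmas, one may simply specialize the general evolution equation for the metric under a flow $\dot x = -\Phi\nu$ established in \cite{CP} to the case $\Phi = H^{-1}$.
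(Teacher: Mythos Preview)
Your derivation is correct: the commutation $\bar\nabla_t x_i = \bar\nabla_i \dot x$, the use of $\braket{\nu,x_j}=0$, and the Weingarten relation together give the result with the right sign and the factor of two. The paper itself does not carry out any computation here but simply refers to \cite[Lemma 2.1]{CG}; your argument is precisely the standard calculation that underlies that reference, so there is no discrepancy to report.
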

\begin{proof}
See \cite[Lemma 2.1]{CG}.
\end{proof}
For $g=\text{det}(g_{ij})$ we conclude from Lemma \ref{evol_gij} that 
\bea
\frac{d}{dt}\sqrt{g}=-\sqrt{g} 
\eea 
and therefore 
\bea
|M(t)|=|M_0|e^{-t} = |M_0|(1-\tau)^{n}. 
\eea 

To prove (\ref{Laenge}) we deduce from Lemma \ref{exponentiell_besser} that
\bea 
H|_{M(\tau)}=H|_{M(t)} \ge e^{\frac{t}{n}} c_0 = (1-\tau)^{-1} c_0 > \sqrt{n\Lambda} 
\eea
for all $\tau_0 \le \tau <1$ with $0<\tau_0<1$ suitable,
and apply Theorem \ref{Hawking_allgemein}.

\section{Appendix} \label{appendix}
\begin{lem} \label{Graph}
Let $N$, $(x^{\al})$ be as in Lemma \ref{lemma_lower_order_estimates} and $M\subset N$ a compact, connected and spacelike $C^m$-hypersurface. Then there exists $u \in C^m(S_0)$ so that 
\bea M
= \graph u |_{S_0}.
 \eea
\end{lem}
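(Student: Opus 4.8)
The plan is to show that the natural projection $\pi\colon N=(a,b)\times S_0\to S_0$, $(x^0,x)\mapsto x$, restricts to a $C^m$-diffeomorphism $\pi|_M\colon M\to S_0$; the function $u:=x^0\circ(\pi|_M)^{-1}$ then lies in $C^m(S_0)$ and satisfies $M=\graph u|_{S_0}$. First I would check that $\pi|_M$ is a local diffeomorphism. The kernel of $d\pi_p$ is spanned by $\partial_0$, which is timelike in the Gaussian coordinate system, whereas $T_pM$ is spacelike; hence $\partial_0\notin T_pM$, so $d\pi_p|_{T_pM}$ is injective, and since $\dim M=n=\dim S_0$ it is an isomorphism. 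Thus $\pi|_M$ is an immersion between equidimensional manifolds, i.e.\ a local diffeomorphism.

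Next I would use compactness. Since $M$ is compact, $\pi|_M$ is proper, hence a closed map; being also a local diffeomorphism it is open, so its image is nonempty, open and closed in the connected manifold $S_0$ and therefore equals $S_0$. A proper surjective local diffeomorphism onto a connected manifold is a finite-sheeted covering map; let $k$ denote the number of sheets. It remains to prove $k=1$, which is the crux of the argument.

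To obtain injectivity I would argue as follows. Over each $x\in S_0$ the fiber consists of $k$ points of $M$ lying on the timelike curve $(a,b)\times\{x\}$; since a point of $N$ is determined by its coordinates $(x^0,x)$, these points have pairwise distinct $x^0$-heights, and no two sheets can coincide in height over a common base point, so distinct sheets never cross. Consequently the pointwise supremum of $x^0$ along each fiber selects, on a neighbourhood of every point, one fixed top sheet, and this defines a \emph{continuous} section $\sigma\colon S_0\to M$ of the covering. Its image $\sigma(S_0)$ is compact, hence closed, and (as the image of a section of a covering) open; by connectedness of $M$ it is all of $M$, forcing $k=1$. Then $\pi|_M$ is a bijective $C^m$ local diffeomorphism, hence a $C^m$-diffeomorphism, and the claim follows with $u=x^0\circ(\pi|_M)^{-1}$. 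Alternatively, one may invoke that a compact spacelike hypersurface in a globally hyperbolic spacetime is achronal, so that the timelike fiber $(a,b)\times\{x\}$ meets $M$ at most once, again giving $k=1$.

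The main obstacle is precisely this injectivity step: ruling out $k\ge 2$. The local-diffeomorphism and covering reductions are routine, but passing from a connected finite cover to a genuine graph requires genuinely Lorentzian input, namely the non-crossing of sheets (equivalently, the achronality of $M$), which is what makes the continuous top-section globally well defined.
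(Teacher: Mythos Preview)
Your argument is correct and is essentially the paper's proof recast in covering-space language: both show that the projection $M\to S_0$ is a local $C^m$-diffeomorphism (you via $\ker d\pi=\mathrm{span}\{\partial_0\}$ being timelike, the paper via the implicit function theorem applied to a local defining function), and then obtain injectivity by showing that the extremal sheet---you take the \emph{top} one as the image of a continuous section $\sigma$, the paper takes the \emph{bottom} one as the set $\Lambda=\{p\in M:\ \text{no point of }M\text{ lies strictly below }p\text{ on its fibre}\}$---is open and closed in the connected manifold $M$. The two arguments are mirror images of one another.
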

\begin{proof}

We combine the techniques in the proofs of \cite[Proposition 2.5]{Indiana} and \cite[Proposition 3.2.5]{CP}.

(i) The projection
\bea 
\pr: M &\longrightarrow& S_0, \q p= \ka
x^0(p), x(p) \kz \mapsto \ka 0, x(p)\kz 
\eea 
is continuous and injective, the latter will be shown in (ii).
Hence
 \bea 
 G \equiv \pr\ka M \kz \subset S_0 
 \eea 
 is closed and $M = \graph u|_G$ with a suitable $u \in C^0\ka G \kz$. We will show that
$G \subset S_0$ is also open ($\Rightarrow G = S_0$) and that $u
\in C^m\ka S_0 \kz$. For this let $z_0 \in G$ be arbitrary, $p = \ka
u(z_0), z_0 \kz \in M$ and $U$ a neighbourhood of $p$ in $N$, so that there is a $\varphi \in C^m\ka U \kz$ with 
\bea 
U \cap M = \left\{\ka x^0, x\kz \in U : \varphi \ka x^0, x \kz = 0\right\}. 
\eea 
Since $M$ is spacelike, $D\varphi$ is timelike and hence
\bea
\frac{\partial \varphi}{\partial x^0}= \left<D\varphi,
\frac{\partial}{\partial x^0}\right> \ne 0. 
\eea 
Due to the implicit function theorem there are neighbourhoods $V$ of $z_0$ in
$S_0$ and $\tilde U$ of $p$ in $U$ with 
\bea 
\tilde U \cap M = \graph \psi |_V, \q \psi \in C^m\ka V\kz. 
\eea 
This proves the lemma in view of $\psi = u|_V$.

(ii) We show that $\pr$ is injective. Let $\bar x \in M$ be so that
\bea 
x^0(\bar x) = \inf_{M} x^0, 
\eea 
where we assume w.l.o.g. that this infimum is strictly larger than 0.

For a point $p\in M$ we define 
\bea
 z(p) = \left\{ (t,\pr(p)):
0\le t < x^0(p)\right\}. 
\eea 

The set 
\bea 
\Lambda = \left\{ p
\in M : z(p) \cap M =\emptyset \right\} 
\eea 
is nonempty, since $\bar x \in \Lambda$ and closed and open as will be proven. Hence we deduce $\Lambda = M$ and the injectivity of $\pr$.

(a) For proving that $\Lambda$ is closed let $p_k \in
\Lambda$ be a sequence with $p_k \rightarrow p_0 \in M$. 
If $p_0 \notin \Lambda$, then there exists $\tilde p_0 \in z(p_0)\cap M$ and by implicit function theorem neighborhoods $U$ of
$\pr(p_0)$ in $S_0$ and $V$ of $\tilde p_0$ in $M$ as well as $u\in
C^m(U)$ with 
\bea 
V = \graph \ u|_U \q \wedge \q u < x^0(p_0). 
\eea 
This implies $p_k \notin \Lambda$ for a.e. $k$, a contradiction.

(b) Suppose that $\Lambda$ is not open, then there exists $p_0 \in
\Lambda$, a sequence $p_k \in M\backslash\Lambda$ with $p_k
\rightarrow p_0$ and a sequence $q_k \in z(p_k)\cap M$.

 Due to inverse function theorem there is a neighbourhood $U$ of $p_0$ in $N$, so that $U\cap M$ is a $C^m$-graph
over a suitable subset of $S_0$, which implies 
\bea
{\overline \lim_{k\rightarrow \infty}} x^0(q_k) < x^0(p_0). 
\eea
Hence a subsequence of the $q_k$ converges to a point in $z(p_0)\cap M$, a contradiction.
\end{proof}

\end{document}